\newtheorem{theorem}{Theorem}
\newtheorem{lemma}[theorem]{Lemma}
\newtheorem{corollary}[theorem]{Corollary}
\newtheorem{question}[theorem]{Question}
\newtheorem{example}[theorem]{Example}
\newtheorem{conjecture}[theorem]{Conjecture}
\newtheorem{convention}[theorem]{Conventions}
\newcommand{\Q}{\mathbb Q}
\newcommand{\Z}{\mathbb Z}
\renewcommand{\r}{\mathrm}
\newcommand{\p}{^{(p)}}
\newcommand{\card}{\mathrm{card}}
\begin{document}

\begin{center}
\texttt{Comments, corrections, and related references welcomed,
as always!}\\[.5em]
{\TeX}ed \today
\vspace{2em}
\end{center}

\title{Thoughts on Eggert's Conjecture}
\thanks{This preprint is readable online at
\url{http://math.berkeley.edu/~gbergman/papers/}
and \url{http://arxiv.org/abs/1206.0326}\,.
The former version is likely to be updated more frequently than
the latter.
}

\subjclass[2010]{Primary: 13A35, 13E10, 16N40,
Secondary:  13H99, 16P10, 16S36, 20M14, 20M25.}
\keywords{Eggert's Conjecture, Frobenius map on a finite-dimensional
nilpotent commutative algebra, finite abelian semigroup}

\author{George M. Bergman}
\address{University of California\\
Berkeley, CA 94720-3840, USA}
\email{gbergman@math.berkeley.edu}

\begin{abstract}
Eggert's Conjecture says that if $R$ is a finite-dimensional
nilpotent commutative algebra over a perfect field $F$ of
characteristic $p,$ and $R\p$ is the image of the $\!p\!$-th
power map on $R,$ then $\dim_F R\geq p\,\dim_F R\p.$
Whether this very elementary statement is true is not known.

We examine heuristic evidence for this conjecture,
versions of the conjecture that are not limited
to positive characteristic and/or to commutative $R,$
consequences the conjecture
would have for semigroups, and examples
that give equality in the conjectured inequality.

We pose several related questions, and briefly survey
the literature on the subject.
\end{abstract}
\maketitle

\section{Introduction}\label{S.intro}

If $F$ is a field of characteristic $p,$ and $R$ is a commutative
$\!F\!$-algebra, then the set $R\p$ of $\!p\!$-th powers
of elements of $R$ is not only closed under multiplication,
but also under addition, by the well-known identity
\begin{equation}\label{d.+^p}
(x+y)^p\ =\ x^p+y^p\quad (x,y\in R).
\end{equation}
Hence $R\p$ is a subring of $R.$
If, moreover, $F$ is a \emph{perfect} field (meaning that every
element of $F$ is a $\!p\!$-th power -- as is true, in
particular, if $F$ is finite, or, at the other
extreme, algebraically closed), then
the subring $R\p$ is also closed under multiplication by
elements of $F:$
\begin{equation}\label{d.ax^p}
a\,x^p\ =\ (a^{1/p}x)^p\in R\p\quad (a\in F,\,x\in R).
\end{equation}
In this situation we can ask ``how big'' the subalgebra $R\p$ is
compared with the algebra $R,$ say in terms of dimension over $F.$

If we take for $R$ a polynomial algebra $F[x]$ over a perfect
field $F,$ we see that $R\p=F[x^p],$
so intuitively, $R\p$ has a basis consisting of one out of every $p$
of the basis elements of $R.$
Of course, these bases are infinite, so we can't divide the
cardinality of one by that of the other.
But if we form finite-dimensional
truncations of this algebra, letting $R=F[x]/(x^{N+1})$ for large
integers $N,$ then we see that the dimension of $R\p$
is indeed about $1/p$ times the dimension of $R.$
If we do similar constructions starting
with polynomials in $d$ variables,
we get $R\p$ having dimension about $1/p^d$ times that of $R.$

Is the ratio $\dim R\p/\dim R$ always small?
No; a trivial counterexample is $R=F;$ a wider class of
examples is given
by the group algebras $R=F\,G$ of finite abelian groups $G$ of orders
relatively prime to~$p.$
In $G,$ every element is a $\!p\!$-th power, hence $R\p$ contains
all elements of $G,$ hence, being closed under addition
and under multiplication by members of $F,$ it is all of $R;$ so
again $\dim\,R\p/\dim\,R=1.$

In the above examples, the $\!p\!$-th power map eventually ``carried
things back to themselves''.
A way to keep this from happening is to assume the algebra $R$ is
\emph{nilpotent}, i.e., that for some positive integer $n,$
$R^n=0,$ where $R^n$ denotes the space of all sums of $\!n\!$-fold
products of members of $R.$
This leads us to

\begin{conjecture}[Eggert's Conjecture \cite{E}]\label{Cj.E}
If $R$ is a finite-dimensional nilpotent
commutative algebra over a perfect field
$F$ of characteristic $p>0,$ then
\begin{equation}\label{d.E}
\dim_F\,R\ \geq\ p\,\dim_F\,R\p.
\end{equation}
\end{conjecture}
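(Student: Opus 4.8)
The plan is to first linearize the problem. The $\!p\!$-th power map $\phi\colon R\to R,$ $\phi(x)=x^p,$ is additive by~\eqref{d.+^p} and satisfies $\phi(ax)=a^p\phi(x)$ for $a\in F,$ so it is semilinear with respect to the Frobenius automorphism $a\mapsto a^p$ of the perfect field $F.$ Its image is $R\p$ and its kernel is the ideal $K=\{x\in R:x^p=0\}.$ Because a semilinear bijection between $\!F\!$-vector spaces preserves dimension (the twisting is by a field automorphism), $\phi$ induces a dimension-preserving bijection $R/K\to R\p,$ giving $\dim_F R=\dim_F K+\dim_F R\p.$ Hence~\eqref{d.E} is \emph{equivalent} to the single inequality $\dim_F K\ge(p-1)\,\dim_F R\p$: the kernel of Frobenius must be at least $p-1$ times as large as its image. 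I would adopt this as the working target. As an auxiliary identity one may iterate $\phi,$ using $R^{(p^{k})}=0$ once $p^{k}\ge n,$ to obtain the telescoping formula $\dim_F R=\sum_{j\ge 0}\dim_F\!\bigl(R^{(p^{j})}\cap K\bigr);$ this is a useful bookkeeping tool but does not by itself separate $\dim_F R$ from $\dim_F R\p.$

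Next I would try to pass to a combinatorial model by filtering $R\supseteq R^2\supseteq\cdots\supseteq R^n=0$ and working in the associated graded algebra $\mathrm{gr}\,R,$ which is graded, commutative, and nilpotent, with $\dim_F\mathrm{gr}\,R=\dim_F R.$ The hope would be that proving the conjecture for $\mathrm{gr}\,R$ yields it for $R.$ The first serious obstacle surfaces immediately: the leading form of $x^p$ equals the $\!p\!$-th power of the leading form of $x$ only when the latter is itself nonzero, so passing to $\mathrm{gr}\,R$ can only \emph{shrink} the image of Frobenius, i.e.\ $\dim_F(\mathrm{gr}\,R)\p\le\dim_F R\p.$ This is the wrong direction for a clean reduction. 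Making such a passage work — say by choosing an $\!F\!$-basis adapted simultaneously to the power filtration and to $\phi,$ or by a flat-degeneration argument showing that $\dim_F R\p$ is semicontinuous along a degeneration of $R$ to a monomial algebra — is where I expect the structural difficulty to begin.

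Granting a reduction to a monomial (semigroup) algebra $R=F[S],$ with basis $\{t^a:a\in S\}$ indexed by a finite commutative nilpotent semigroup $S,$ the problem becomes purely combinatorial: $R\p$ has basis $\{t^{pa}:a\in S,\ pa\in S\},$ so~\eqref{d.E} reads $\card(S)\ge p\,\card\{a\in S:pa\in S\}.$ In a single variable, with $S=\{1,\dots,N\},$ this is transparent: to each $a$ with $pa\in S$ assign the block $\{(a-1)p+1,\dots,ap\}$ of $p$ consecutive exponents, all of which lie in $S,$ and note that the blocks attached to distinct $a$ are pairwise disjoint. The plan in general is to find, for every $\!p\!$-th-power exponent $pa,$ a set of $p$ distinct basis elements chargeable to it, with the charges across different $pa$ remaining injective.

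The main obstacle is exactly this charging in the absence of a linear order. In several variables, or in an arbitrary finite commutative semigroup, there is no canonical way to form disjoint blocks of size $p,$ and greedy or lexicographic orderings can fail to keep the charges injective. Devising a combinatorial injection that witnesses $\card(S)\ge p\,\card\{a:pa\in S\}$ uniformly — or, equivalently on the algebra side, producing $(p-1)\dim_F R\p$ linearly independent elements annihilated by $\phi$ — is the combinatorial heart of Eggert's Conjecture, and is where I expect the argument to stall, consistent with the statement's remaining open.
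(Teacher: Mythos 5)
The statement you were asked to prove is a \emph{conjecture}, and the paper contains no proof of it; the abstract and introduction state explicitly that whether it is true is not known. Your submission, to its credit, is not a proof either and does not pretend to be one: each of your three stages ends by naming the point at which it stalls. So the verdict is that there is a genuine gap --- indeed two --- and they are exactly the ones you identify. First, the reduction from a general nilpotent commutative $R$ to the associated graded (or to a monomial degeneration) fails because an element that is a $\!p\!$-th power may also be a sum of products of more than $p$ factors, and then its image in $R^p/R^{p+1}$ vanishes; the paper makes precisely this observation after Corollary~\ref{C.graded} and notes that the method only yields $\dim R\geq p\,\dim(R\p/(R\p\cap R^{p+1})),$ which is strictly weaker than~(\ref{d.E}). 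No semicontinuity or adapted-basis argument that repairs this is known. Second, even granting a monomial model, the ``charging'' injection you ask for in a general finite nilpotent commutative semigroup is the open combinatorial problem: it is the paper's Conjecture~\ref{Cj.E_semigp} (for $n=p$), which the paper only shows would \emph{follow from} Eggert's Conjecture (Lemma~\ref{L.alg=>semigp}), not the reverse in general. The monomial-algebra case itself is a genuine theorem (Kim and Park, cited in \S\ref{S.literature}), but it does not feed back into the general case for lack of the first reduction.

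Your preliminary reformulation is sound and standard: Frobenius is Frobenius-semilinear, so $\dim_F R=\dim_F K+\dim_F R\p$ with $K$ its kernel, and~(\ref{d.E}) is equivalent to $\dim_F K\geq(p-1)\dim_F R\p;$ your telescoping identity over the chain $R\supseteq R\p\supseteq R^{(p^2)}\supseteq\cdots$ is also correct. This is close in spirit to the paper's actual partial results: \S\ref{S.V^n} chooses a subspace $V$ mapped bijectively onto $R\p$ and proves (Lemma~\ref{L.dimV^i}) that $\dim V^i\geq\dim V$ for $i\leq p,$ which settles the graded case with $(R_2)\p=0$ (Corollary~\ref{C.graded}) but cannot in general show the spaces $V,V^2,\dots,V^p$ contribute independently to $\dim R.$ If you want to push further, the productive directions suggested by the paper are the restricted graded cases (McLean's results), small $\dim R\p,$ and the two-generator case (Korbel\'{a}\v{r}), rather than a direct degeneration argument.
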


Of course, a nonzero nilpotent algebra does not have a unit.
Readers who like their algebras unital may think of the $R$
occurring above and throughout this note as the maximal
ideal of a finite-dimensional local unital $\!F\!$-algebra.

Let us set down some conventions.

\begin{convention}\label{Cv.cm_ass}
Throughout this note, $F$ will be a field.
The symbol ``$\dim\!$'' will always stand for ``$\dim_F\!$'',
i.e., dimension as an $\!F\!$-vector-space.

Except where the contrary is stated \textup{(}in a few brief
remarks and two examples\textup{)}, $\!F\!$-algebras will be assumed
associative, but not, in general, unital.
\textup{(}Most of the time, we will be considering commutative
algebras, but we will make commutativity explicit.
When we simply write ``associative algebra'',
this will signal ``not necessarily commutative''.\textup{)}
An {\em ideal} of an $\!F\!$-algebra will mean a ring-theoretic
ideal which is also an $\!F\!$-subspace.

If $R$ is an $\!F\!$-algebra, $V$ an $\!F\!$-subspace
of $R,$ and $n$ a positive integer, then $V^n$ will denote the
\emph{\mbox{$\!F\!$-subspace}} of $R$ spanned
by all $\!n\!$-fold products of
elements of $V,$ while $V^{(n)}$ will denote the \emph{set} of
$\!n\!$-th powers of elements of $V.$
\end{convention}

Thus, if $V$ is a subspace of a commutative
algebra $R$ over a perfect field $F$ of characteristic $p,$ then
$V\p$ will also be a subspace of $R,$ but for a general
base-field $F,$ or for noncommutative $R,$ this is not so.
The map $x\mapsto x^p$ on a commutative algebra $R$ over a field of
characteristic $p$ is called the \emph{Frobenius map}.

We remark that the unital rings $R=F[x]/(x^{N+1})$ that we discussed
before bringing in nilpotence
generally fail to satisfy~(\ref{d.E}).
Most obvious is the case $N=0,$ where $R=F.$
More generally, writing $N=pk+r$
$(0\leq r< p),$ so that the lowest and highest powers of $x$
in the natural basis of $R$ are $x^0$ and $x^{pk+r},$
we find that $\dim\,R\p/\dim\,R=(k+1)/(pk+r+1),$
which is $>1/p$ unless $r=p-1.$

The corresponding nilpotent algebras are constructed
from the ``nonunital polynomial algebra'', i.e., the algebra of
polynomials with zero constant term, which we shall write
\begin{equation}\label{d.[F][x]}
[F][x]\ =\ \{\,\sum_{i>0} a_i x^i\,\}\ \subseteq\ F[x].
\end{equation}
When we divide this by the ideal generated by $x^{N+1},$
again with $N=pk+r$ $(0\leq r<p),$ we find that
$\dim\,R\p/\dim\,R=k/(pk+r),$ which is always $\leq 1/p,$
with equality only when $r=0,$ i.e., when $p\,|\,n.$

As before, examples like $R=[F][x,y]/(x^M,\,y^N)$
give ratios $\dim\,R\p/\dim\,R$ strictly lower than $1/p.$
This suggests that generation by more than one element tends to
lower that ratio, and that perhaps that ratio can equal $1/p$ only
for cyclic algebras.
This is not the case, however.
Indeed, it is easy to verify that that ratio is multiplicative
with respect to tensor products,
\begin{equation}\label{d.OX}
\dim\,(R\otimes S)\p/\dim\,(R\otimes S)\ =
\ (\dim\,R\p/\dim\,R)\,(\dim\,S\p/\dim\,S).
\end{equation}
Hence if we
tensor a nilpotent algebra $R$ of the form $[F][x]/(x^{pk+1}),$
for which we have seen that the ratio is $1/p,$ with a non-nilpotent
algebra for which the ratio is $1$
(for instance, a group algebra $F\,G$ with $p\centernot{|}\,|G|),$
we get further nilpotent examples for which the ratio is $1/p.$
Also, $\dim\,R$ and $\dim\,R\p$ are both additive
with respect to \emph{direct products}; so any direct product of two
nilpotent algebras for each of which the ratio is $1/p$ is another
such algebra.
In \S\ref{S.eg} we will discover further examples in which
the ratio comes out exactly $1/p,$ for reasons that are less clear.

\section{A first try at proving Eggert's Conjecture}\label{S.V^n}

We have seen that for $R$ a commutative
algebra over a perfect field $F$ of characteristic $p>0,$
the $\!p\!$-th power map on
$R$ over $F$ is ``almost'' linear.
In particular, its image is a vector subspace (in fact, a subalgebra).

Pleasantly, we can even find a vector subspace $V\subseteq R$
which that map sends bijectively to $R\p.$
Namely, take any $\!F\!$-basis $B$ for $R\p,$ let $B'$
be a set consisting of exactly one $\!p\!$-th root of
each element of $B,$ and let $V$ be the $\!F\!$-subspace
of $R$ spanned by $B'.$
Since the $\!p\!$-th power map sends $\sum_{x\in B'} a_x x$
to $\sum_{x\in B'} a_x^p x^p,$ it hits each
element of $R\p$ exactly once.

This suggests the following approach to Eggert's Conjecture.
Suppose we take such a subspace $V,$ and look at the subspaces
$V,\ V^2,\ \dots,\ V^p$
(defined as in the last paragraph of Convention~\ref{Cv.cm_ass}).
Can we deduce that each of them has dimension at least
that of $V\p=R\p$ (as the first and last certainly do),
and conclude that their sum within $R$ has dimension at least $p$
times that of $R\p$?

The answer is that yes, we can show that each has
dimension at least that of $R\p,$ but no, except under
special additional hypotheses, we cannot say that
the dimension of their sum is the sum of their dimensions.

The first of these claims can be proved in a context that does
not require positive characteristic, or commutativity, or nilpotence.
We \emph{will} have to assume $F$ algebraically closed; but we will
subsequently see that for commutative algebras over a perfect field
of positive characteristic, the general case reduces to that case.

\begin{lemma}\label{L.dimV^i}
Let $F$ be an algebraically closed field, $R$ an associative
$\!F\!$-algebra, $V$ a finite-dimensional subspace of $R,$ and $n$
a positive integer such that every nonzero element of $V$ has
nonzero $\!n\!$-th power.
Then for all positive integers $i\leq n$ we have
\begin{equation}\label{d.dimV^i}
\dim\,V^i\ \geq\ \dim\,V.
\end{equation}
\end{lemma}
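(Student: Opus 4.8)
The plan is to reformulate the conclusion as a statement about the common zeros of a system of homogeneous polynomials, and then to invoke a standard dimension bound. First I would remove the dependence on $i$: the hypothesis for the exponent $n$ implies the corresponding hypothesis for every $i\le n$, since if $v\in V$ is nonzero with $v^i=0$ then $v^n=v^i\cdot v^{n-i}=0$ by associativity, contradicting the assumption. So it suffices to fix $i$ with $1\le i\le n$ and show $\dim V^i\ge\dim V$, knowing that every nonzero element of $V$ has nonzero $\!i\!$-th power.

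Next I would pass to coordinates. Put $d=\dim V$ and $e=\dim V^i$, and fix bases $v_1,\dots,v_d$ of $V$ and $w_1,\dots,w_e$ of $V^i$; note $v^i\in V^i$ for each $v\in V$, being an $\!i\!$-fold product of elements of $V$. Expanding the generic element $v=\sum_j x_j v_j$ gives $v^i=\sum_k \phi_k(x_1,\dots,x_d)\,w_k$, where each $\phi_k$ is a homogeneous polynomial of degree $i$ in $x_1,\dots,x_d$ with coefficients in $F$. The hypothesis that every nonzero element of $V$ has nonzero $\!i\!$-th power now says precisely that the forms $\phi_1,\dots,\phi_e$ have no common zero in $F^d$ other than the origin.

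To finish, suppose for contradiction that $e<d$. The common zero locus $Z\subseteq F^d$ of $\phi_1,\dots,\phi_e$ contains the origin, since the $\phi_k$ are homogeneous of positive degree; and by Krull's height theorem -- equivalently, the classical fact that fewer than $d$ homogeneous equations in $d$ variables over an algebraically closed field always have a nontrivial common solution -- every irreducible component of $Z$ has dimension at least $d-e\ge 1$. Hence $Z\supsetneq\{0\}$, giving a nonzero common zero of the $\phi_k$, a contradiction; so $e\ge d$, as desired. The step carrying the weight of the argument is this dimension bound, but it is a well-known result, so in practice the only thing requiring thought is the reformulation in terms of common zeros of forms. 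Algebraic closure of $F$ is used exactly once, to produce an $F$-point of $Z$ other than $0$ from a positive-dimensional component; and the lemma genuinely fails without it -- e.g.\ over $\mathbb{R}$, take $R=\mathbb{R}\oplus\mathbb{R}v_1\oplus\mathbb{R}v_2\oplus\mathbb{R}w$ with $v_1^2=v_2^2=w$, $v_1v_2=v_2v_1=0$ and $v_iw=w^2=0$, and $V=\mathbb{R}v_1+\mathbb{R}v_2$, for which $\dim V^2=1<2$. Commutativity, nilpotence of $R$, and the characteristic of $F$ play no role, matching the generality of the statement.
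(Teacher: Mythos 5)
Your proof is correct and follows essentially the same route as the paper: translate $v^i=0$ into $e=\dim V^i$ homogeneous forms of degree $i$ in $d=\dim V$ variables, and invoke the standard fact that fewer forms than variables over an algebraically closed field have a nontrivial common zero (the paper cites Mumford for this; your appeal to Krull's height theorem is the same fact). The only cosmetic difference is that you perform the reduction $v^i=0\Rightarrow v^n=0$ at the outset rather than at the end.
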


\begin{proof}
Let $d=\dim\,V,$ and let $x_1,\dots,x_d$ be a basis for $V$ over $F.$
Suppose, by way of contradiction, that for some $i\leq n$
we had $\dim\,V^i=e<d.$
Then we claim that some nonzero $v\in V$ must satisfy $v^i=0.$

Indeed, writing the general element of $V$
as $v=a_1 x_1 + \dots + a_d x_d$ $(a_1,\dots,a_d\in F),$ we see
that the condition $v^i=0,$ expressed in terms of an
$\!e\!$-element basis of $V^i,$ consists of $e<d$ equations,
each homogeneous of positive degree (in fact, all of the
same degree, $i),$ in $d$ unknowns $a_1,\dots,a_d.$
But a system of homogeneous polynomial equations of positive
degrees with fewer equations
than unknowns over an algebraically closed field always has
a nontrivial solution \cite[p.65, Corollary~3*]{Mumford};
so, as claimed, there is a nonzero $v\in V$ with $v^i=0.$

Multiplying by $v^{n-i}$ if $i<n,$ or leaving the equation unchanged
if $i=n,$ we see that $v^n=0,$ contradicting
the hypothesis on $V,$ and completing the proof.
\end{proof}

(We could even have generalized the above proof to nonassociative
algebras, if we defined $x^i$ inductively as, say, the right-bracketed
product $x(x(\dots x)),$ and $V^i$ similarly as $V(V(\dots V)).)$

Now if $F$ is any perfect field of characteristic $p,$ and $n=p$
(or more generally, a power of $p),$ and $R$ is commutative, then
the $\!n\!$-th power map is, up to adjustment of scalars,
a linear map of $\!F\!$-vector-spaces,
so the statement that it sends no nonzero
element of $V$ to $0$ says it has
trivial kernel; and this property is preserved under extension
of scalars to the algebraic closure of
$F,$ as are the dimensions of the various spaces $V^i.$
Hence, as stated earlier, in this situation Lemma~\ref{L.dimV^i}
implies the corresponding result with ``algebraically closed''
weakened to ``perfect''.

But unfortunately, we cannot say that
$\dim\,R\geq\sum_{i\leq p}\dim\,V^i$
unless we know that the sum of the $V^i$ is direct.
Here is a special case in which the latter condition clearly holds.

\begin{corollary}\label{C.graded}
Let $R$ be a finite-dimensional commutative algebra over a perfect
field $F$ of characteristic $p>0,$ and assume that $R$ is graded by
the positive integers, is generated by its homogeneous
component $R_1$ of degree $1,$ and satisfies $(R_2)\p=0.$

Then $\dim\,R_1,\,\dots,\,\dim\,R_p$ are all $\geq\dim\,R\p,$ so
$\dim\,R\geq p\,\dim\,R\p.$
\end{corollary}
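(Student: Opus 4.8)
The plan is to exploit the grading so that the sum of the subspaces $V^i$ from \S\ref{S.V^n} is \emph{automatically} direct --- which is exactly the difficulty flagged just before this corollary. I would begin with the structural consequences of the hypotheses: since $R$ is graded by the positive integers and generated by $R_1,$ we have $R=\bigoplus_{n\ge 1}R_n$ with $R_n=(R_1)^n$ for all $n\ge 1$ (so $R$ is in particular nilpotent). Next I claim the Frobenius map ``sees only the degree-one part'': $R\p=(R_1)\p\subseteq R_p.$ Indeed, for $i\ge 2$ every element of $R_i=(R_1)^i$ is an $\!F\!$-linear combination of products $a\,c$ with $a\in R_2$ and $c\in R$ (taking $i=2$ directly, and for $i\ge 3$ using $(R_1)^i=(R_1)^2(R_1)^{i-2}=R_2\,(R_1)^{i-2}$); since $(R_2)\p=0$ and the Frobenius map is additive, $(ac)^p=a^p c^p=0,$ hence $(R_i)\p=0.$ Writing a general $x\in R$ as $x=x_1+x'$ with $x_1\in R_1$ and $x'\in\bigoplus_{i\ge 2}R_i,$ additivity of the Frobenius map gives $x^p=x_1^p+(x')^p=x_1^p,$ so $R\p=(R_1)\p,$ a subspace of $R_p.$

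Now I would run the construction of \S\ref{S.V^n} \emph{inside $R_1$}: take an $\!F\!$-basis $B$ of $R\p=(R_1)\p,$ pick for each $b\in B$ a $\!p\!$-th root $b'\in R_1$ (possible by the previous paragraph), and set $V=\mathrm{span}_F\{b':b\in B\}\subseteq R_1.$ As in \S\ref{S.V^n}, the Frobenius map carries $V$ bijectively onto $R\p,$ so $\dim V=\dim R\p$ and no nonzero element of $V$ has zero $\!p\!$-th power. Applying Lemma~\ref{L.dimV^i} with $n=p$ --- in its extension to perfect base fields noted just after that lemma --- yields $\dim V^i\ge\dim V=\dim R\p$ for every $i$ with $1\le i\le p.$ Since $V\subseteq R_1$ we have $V^i\subseteq (R_1)^i=R_i,$ hence $\dim R_i\ge\dim R\p$ for $1\le i\le p;$ and because the $R_i$ are independent subspaces of $R,$
\[
\dim R\ \ge\ \sum_{i=1}^{p}\dim R_i\ \ge\ p\,\dim R\p .
\]

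The only step calling for real care is the identity $R\p=(R_1)\p$: it is precisely here that the hypothesis $(R_2)\p=0,$ together with additivity of the Frobenius map, does the work, simultaneously forcing $R\p\subseteq R_p$ and letting us place $V$ inside $R_1.$ Once that is established, directness of $\sum_{i\le p}V^i$ costs nothing (the $V^i$ lie in distinct graded components), and the inequality drops out of Lemma~\ref{L.dimV^i}; so I anticipate no serious obstacle beyond that reduction.
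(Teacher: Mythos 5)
Your proposal is correct and follows essentially the same route as the paper: establish $R\p=(R_1)\p\subseteq R_p$ from $(R_2)\p=0$ and the generation of $R$ by $R_1,$ choose $V\subseteq R_1$ mapped bijectively onto $R\p$ by the Frobenius map, apply Lemma~\ref{L.dimV^i} (in its perfect-field form) to get $\dim V^i\geq\dim R\p,$ and use $V^i\subseteq R_i$ together with the grading to make the sum direct. The only cosmetic difference is that you verify $(R_i)\p=0$ for $i\geq 2$ via $R_i=R_2(R_1)^{i-2},$ where the paper iterates $(R_{i+1})\p=(R_i)\p(R_1)\p;$ these are the same computation.
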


\begin{proof}
Since $R$ is the direct sum of its subspaces $R_i,$ its subalgebra
$R\p$ will be the direct sum of its subspaces $(R_i)\p\subseteq R_{ip}.$
Since $R$ is generated by $R_1,$ we have $R_{i+1}=R_i\,R_1$ for
all $i;$ hence $(R_{i+1})\p=(R_i)\p\,(R_1)\p;$ hence as
$(R_2)\p$ is zero, so are $(R_3)\p,\ (R_4)\p,\ \cdots\,.$
Hence $R\p=(R_1)\p.$

Now let $d=\dim\,R\p\subseteq R_p,$ and
take a $\!d\!$-dimensional subspace $V\subseteq R_1$ such that the
$\!p\!$-th power map carries $V$ bijectively to $R\p.$
By Lemma~\ref{L.dimV^i} and the discussion following it,
we have $\dim\,V^i\geq d$ for $i=1,\dots,p,$ hence
\begin{equation}\label{d.sum}
\dim\,R\ =\ \sum_{i=1}^\infty\dim\,R_i\ \geq
\ \sum_{i=1}^p\dim\,V^i\ \geq\ p\,d\ =\ p\,(\dim\,R\p).\qedhere
\end{equation}
\end{proof}

One might hope to get a similar result for ungraded nilpotent $R,$
by taking the filtration $R\supseteq R^2\supseteq R^3\supseteq\dots,$
and studying the associated graded algebra,
$S=\bigoplus_i S_i$ with $S_i=R^i/R^{i+1}.$
This will indeed be generated by $S_1;$ but unfortunately, $R\p$ will
not in general be embedded in $S_p,$ since an element that can
be written as a $\!p\!$-th power of one element may be
expressible in another way as
a product of more than $p$ factors (or a sum of such products), in
which case it will have zero image in $S_p=R^p/R^{p+1}.$
(What one can easily deduce by this approach is that
$\dim R\geq p\,\dim(R\p/(R\p\cap R^{p+1})).$
But that is much weaker than Eggert's conjecture.)

Putting aside the question of whether we
can reduce the ungraded case to the graded, let us ask whether,
assuming $R$ graded and generated by $R_1,$ we can weaken the
hypothesis $(R_2)\p=0$ of Lemma~\ref{L.dimV^i}.
Suppose we instead assume $(R_3)\p=0.$
Thus, $R\p=(R_1)\p\oplus (R_2)\p\subseteq R_p\oplus R_{2p}.$

In addition to our subspace $V\subseteq R_1$ which is
mapped bijectively to $(R_1)\p$ by the $\!p\!$-th power map,
we can now choose a subspace $W\subseteq R_2$ that is
mapped bijectively to $(R_2)\p.$
Letting $d_1=\dim\,(R_1)\p=\dim\,V$ and $d_2=\dim\,(R_2)\p=\dim\,W,$
we can deduce from Lemma~\ref{L.dimV^i} that
$\dim\,R_1,\ \dim\,R_2,\ \dots\,,\ \dim\,R_p$ are all $\geq\,d_1$
and that
$\dim\,R_2,\ \dim\,R_4,\ \dots\,,\ \dim\,R_{2p}$ are all $\geq\,d_2.$
The trouble is, these two lists overlap in
$\{R_2,\ R_4,\ \dots\,,\ R_{2\lfloor p/2\rfloor}\},$
while we know nothing about the sizes of the $R_i$ for odd $i$
between $p+1$ and $2p.$
If we could prove that they, like the $R_i$ for even $i$ in that
range, all had dimensions at least $d_2,$ we would be in good shape:
With $R_i$ at least $\!d_1\!$-dimensional for $i=1,\dots,p$
and at least $\!d_2\!$-dimensional for $i=p+1,\dots,2p,$ we would have
total dimension at least
$p\,\dim(R_1)\p+p\,\dim(R_2)\p=p\,\dim\,R\p.$

One might imagine that since $\dim\,R_i$ is at least
$\dim\,R_2\p$ for all even $i\leq 2p,$ those dimensions could
not perversely come out smaller for $i$ odd.
However, the following example, though involving a
noncommutative ring, challenges this intuition.

\begin{example}\label{E.a(xy)^nb}
For any positive integer $d$ and any field $F,$ there exists
an associative graded $\!F\!$-algebra
$R,$ generated by $R_1,$ such that the dimension of the
component $R_n$ is $2d$ for every odd $n>2,$
but is~$d^2+1$ for every even $n>2.$
\end{example}

\begin{proof}[Construction]
Let $R$ be presented by $d+1$ generators
$x,\ y,\ z_1,\ \dots,\ z_{d-1}$ of degree $1,$ subject to the
relations saying that $xx=yy=0,$ and that every $\!3\!$-letter
word in the generators that does \emph{not} contain the
substring $xy$ is likewise $0.$
It is easy to verify that the nonzero words of length $>2$ are
precisely those strings consisting of a ``core'' $(xy)^m$ for
some $m\geq 1,$ possibly preceded by an arbitrary letter other than $x,$
and/or followed by an arbitrary letter other than $y.$
One can deduce that for $m\geq 1,$ the nonzero words of odd
length $2m+1$
are of two forms, $(xy)^m\,a$ and $a\,(xy)^m$ for some letter $a,$
and that for each of these forms there
are $d$ choices for $a,$ giving $2d$ words altogether; while
for words of even length $2m+2$ there are also
two forms, $a\,(xy)^m\,b$ and $(xy)^{m+1},$ leading to $d^2+1$ words.
\end{proof}

Even for commutative $R,$ we can get a certain amount
of irregular behavior:

\begin{example}\label{E.4343}
For any field $F$ there exists a commutative graded $\!F\!$-algebra $R,$
generated by $R_1,$ such that the dimensions of $R_1,$
$R_2,$ $R_3,$ $R_4$ are respectively $4,$ $3,$ $4,$ $3.$
\end{example}

\begin{proof}[Construction]
First, let $S$ be the commutative algebra presented by
generators $x,\,y,\,z_1,\,z_2$ in degree $1,$ and relations saying that
$z_1$ and $z_2$ have zero product with all four generators.
We see that for all $n>1$ we have $\dim S_n=n+1,$ as in the polynomial
ring $[F][x,y],$
so $S_1,\dots,S_4$ have dimensions $4,$ $3,$ $4,$ $5.$
If we now impose an arbitrary pair of independent relations homogeneous
of degree $4,$ we get a graded algebra $R$ whose
dimension in that degree is $3$ rather than $5,$
without changing the dimensions in lower degrees.
\end{proof}

As we shall note in \S\ref{S.literature}, much of the work
towards proving Eggert's Conjecture in the literature has
involved showing that such misbehavior in the sequence of
dimensions is, in fact, restricted.

(Incidentally, if we take $F$ in Example~\ref{E.4343} to be perfect
of characteristic~$3,$ and divide out by $R_4,$
we do not get a counterexample to Eggert's Conjecture;
rather, $(R_1)^{(3)}$ turns out to be a proper subspace of $R_3.)$

\section{Relations with semigroups}\label{S.semigp}

The examples we began with in \S\ref{S.intro} were ``essentially''
semigroup algebras of abelian semigroups.

To make this precise, recall that a \emph{zero} element in a semigroup
$S$ means an element $z$ (necessarily unique) such that
$sz=zs=z$ for all $s\in S.$
If $S$ is a semigroup with zero, and $F$ a field, then the
\emph{contracted semigroup algebra} of $S,$ denoted
$F_0\,S,$ is the $\!F\!$-algebra with basis $S-\{z\},$ and
multiplication which agrees on this basis with the multiplication
of $S$ whenever the latter gives nonzero values, while when the
product of two elements of $S-\{z\}$ is $z$ in $S,$
it is taken to be $0$ in this algebra \cite[\S5.2, p.160]{C+P}.
So, for example, the algebra $[F][x]/(x^{N+1})$
of \S\ref{S.intro} is the contracted semigroup
algebra of the semigroup-with-zero presented as
such by one generator $x,$ and the one relation $x^{N+1}=z.$
(Calling this a presentation as a semigroup-with-zero means
that we also assume the relations making the products of
all elements with $z$ equal to~$z.)$

Above (following \cite{C+P})
I have written $z$ rather than $0$ in $S,$ so as to be able to talk
clearly about the relationship between the
zero element of $S$ and that of $F_0\,S.$
But since these are identified in the construction of the latter
algebra, we shall, for the remainder of this section,
write $0$ for both, as noted in

\begin{convention}\label{Cv.semigp}
In this section, semigroups with zero will be written multiplicatively,
and their zero elements written $0.$

If $X$ is a subset of a semigroup $S$ \textup{(}with or
without zero\textup{)} and $n$ a
positive integer, then $X^n$ will denote the set
of all $\!n\!$-fold products of
elements of $X,$ while $X^{(n)}$ will denote the set of all
$\!n\!$-th powers of elements of~$X.$
A semigroup $S$ with zero will be called \emph{nilpotent} if
$S^n=\{0\}$ for some positive integer $n.$
\end{convention}

Clearly, $F_0\,S$ is nilpotent as an algebra if and only
if $S$ is nilpotent as a semigroup.

If we could prove Eggert's Conjecture, I claim that we could deduce

\begin{conjecture}[semigroup version of Eggert's Conjecture]\label{Cj.E_semigp}
If $S$ is a finite nilpotent commutative semigroup with
zero, then for every positive integer $n,$
\begin{equation}\label{d.E_semigp}
\card(S\,{-}\,\{0\})\ \geq\ n\ \card(S^{(n)}{-}\,\{0\}).
\end{equation}
\end{conjecture}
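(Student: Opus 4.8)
The plan is to deduce Conjecture~\ref{Cj.E_semigp} from Eggert's Conjecture by applying the latter to a suitable algebra built from $S$, choosing the ground field so that the $\!p\!$-th power map in the algebra faithfully reflects the $\!n\!$-th power map in the semigroup. The first move is to reduce to the case $n=p$ a prime: if~(\ref{d.E_semigp}) holds for all prime $n$, then for composite $n=n_1 n_2$ we have $S^{(n)}=(S^{(n_1)})^{(n_2)}$, and one chains the inequalities, using also that $S^{(n_1)}\cup\{0\}$ is again a finite nilpotent commutative semigroup with zero (it is closed under multiplication since $S$ is commutative). So it suffices to prove $\card(S-\{0\})\ge p\,\card(S^{(p)}-\{0\})$ for each prime $p$.

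Now fix a prime $p$ and let $F$ be a perfect field of characteristic $p$ --- for instance $F=\mathbb{F}_p$, which is finite, hence perfect. Form $R=F_0 S$, the contracted semigroup algebra. Since $S$ is nilpotent as a semigroup, $R$ is nilpotent as an algebra, and it is finite-dimensional with $\dim_F R=\card(S-\{0\})$; it is commutative because $S$ is. Eggert's Conjecture then gives $\dim_F R\ge p\,\dim_F R\p$. The task is to bound $\dim_F R\p$ from below by $\card(S^{(p)}-\{0\})$. For each $s\in S$ with $s\ne 0$, the basis element $s$ of $R$ has $s^p$ equal, in $R$, either to the basis element $s^p$ of $S$ (if $s^p\ne 0$ in $S$) or to $0$. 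Hence $R\p$ contains every basis element $t\in S^{(p)}-\{0\}$, and these are $F\!$-linearly independent in $R$ (being distinct members of the basis $S-\{0\}$), so $\dim_F R\p\ge\card(S^{(p)}-\{0\})$. Combining, $\card(S-\{0\})=\dim_F R\ge p\,\dim_F R\p\ge p\,\card(S^{(p)}-\{0\})$, which is exactly~(\ref{d.E_semigp}) for $n=p$.

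**The main obstacle** I anticipate is not in this argument, which is essentially bookkeeping, but in making sure the reduction step at the start is airtight --- in particular, that one is allowed to iterate, i.e.\ that $S^{(n_1)}\cup\{0\}$ genuinely satisfies all the hypotheses (finiteness and nilpotence are clear; the one point to check is that it is a sub\emph{semigroup}, which uses commutativity so that a product of $\!n_1\!$-th powers is again an $\!n_1\!$-th power). One should also note in passing the degenerate case $n=1$, where~(\ref{d.E_semigp}) is a triviality, and the case where $S^{(n)}=\{0\}$, where the right-hand side is $0$; neither causes trouble. Thus the substance of the implication is really just the single clean observation that contracted semigroup algebras over $\mathbb{F}_p$ carry the $\!p\!$-th power operation of $S$ into the Frobenius map, with no collision loss that would shrink the image below $\card(S^{(p)}-\{0\})$.
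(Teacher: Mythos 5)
Your proposal is correct and follows essentially the same route as the paper: reduce to prime $n$ via the identity $(S^{(n_1)})^{(n_2)}=S^{(n_1 n_2)}$ and the fact that $S^{(n_1)}$ is again a finite nilpotent commutative semigroup with zero, then apply Eggert's Conjecture to the contracted semigroup algebra $F_0S$ over a perfect field of characteristic $p.$ The only cosmetic difference is that the paper asserts the equality $(F_0\,S)\p=F_0(S\p),$ whereas you prove only the inequality $\dim (F_0 S)\p\geq\card(S^{(p)}{-}\{0\}),$ which is all that is needed.
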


Let us prove the asserted implication:

\begin{lemma}\label{L.alg=>semigp}
If Conjecture~\ref{Cj.E} is true, then so is
Conjecture~\ref{Cj.E_semigp}.
\end{lemma}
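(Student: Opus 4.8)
The plan is to deduce Conjecture~\ref{Cj.E_semigp} from Conjecture~\ref{Cj.E} by passing to contracted semigroup algebras, after first reducing the arbitrary exponent $n$ in (\ref{d.E_semigp}) to the case where $n$ is a prime $p.$ For that prime case I would take $F$ to be any perfect field of characteristic $p$ (say $\Z/p\Z$) and form $R=F_0\,S.$ Then $R$ is finite-dimensional with distinguished basis $S-\{0\}$ (so $\dim R=\card(S-\{0\})$), is commutative, and — by the remark following Convention~\ref{Cv.semigp} — is nilpotent; hence Conjecture~\ref{Cj.E} applies to it.

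The crux is then the identity $\dim R\p=\card(S^{(p)}-\{0\}).$ Writing a general element of $R$ as $v=\sum_{s\in S-\{0\}}a_s\,s$ with $a_s\in F,$ the identities (\ref{d.+^p}) and (\ref{d.ax^p}) give $v^p=\sum_s a_s^p\,s^p;$ and since $F$ is perfect the coefficients $a_s^p$ range over all of $F,$ so $R\p$ is exactly the $\!F\!$-span of $\{\,s^p:s\in S-\{0\}\,\}.$ Now for each $s\in S-\{0\}$ the element $s^p\in R$ is $0$ if $s^p=0$ in $S$ and is a basis vector of $R$ otherwise, and the basis vectors so obtained are precisely the members of $S^{(p)}-\{0\};$ being a subset of a basis, this set is linearly independent, of size $\card(S^{(p)}-\{0\}).$ Hence $\dim R\p=\card(S^{(p)}-\{0\}),$ and Conjecture~\ref{Cj.E} yields
\[
\card(S-\{0\})\ =\ \dim R\ \geq\ p\,\dim R\p\ =\ p\,\card(S^{(p)}-\{0\}),
\]
which is (\ref{d.E_semigp}) for $n=p.$

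To handle a general exponent $n,$ note first that $n=1$ is trivial, and that for a prime $p$ the set $S\p=\{s^p:s\in S\}$ is, because $S$ is commutative ($s^p t^p=(st)^p$), a subsemigroup of $S$ containing $0;$ as a subsemigroup of $S$ it is again finite, commutative, and nilpotent. Moreover $(S^{(p_1\cdots p_j)})^{(p_{j+1})}=S^{(p_1\cdots p_{j+1})}$ for any primes $p_1,\dots,p_{j+1}.$ So, writing $n=p_1\cdots p_k$ and applying the already-established prime case successively to $S,\ S^{(p_1)},\ S^{(p_1 p_2)},\ \dots,$ we obtain
\[
\card(S-\{0\})\ \geq\ p_1\,\card(S^{(p_1)}-\{0\})\ \geq\ \cdots\ \geq\ p_1\cdots p_k\,\card(S^{(n)}-\{0\}),
\]
which is the desired inequality (\ref{d.E_semigp}).

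I do not expect a real obstacle here; the argument is essentially bookkeeping. The one step that uses the hypotheses in an essential way is the identity $\dim R\p=\card(S^{(p)}-\{0\})$: additivity of the Frobenius map, (\ref{d.+^p}), is what lets us rewrite $\!p\!$-th powers of arbitrary elements of $R$ as $\!F\!$-combinations of $\!p\!$-th powers of basis vectors, while perfectness of $F$ is what guarantees that we obtain \emph{every} such combination rather than some proper subspace. Beyond that, one should just check that $S\p$ genuinely inherits finiteness, commutativity, nilpotence, and the presence of a zero — each of which, as indicated above, is immediate.
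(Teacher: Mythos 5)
Your proposal is correct and follows essentially the same route as the paper: reduce to prime exponents via the identity $(S^{(n_1)})^{(n_2)}=S^{(n_1 n_2)}$ and a chain of inequalities, then handle a prime $p$ by applying Conjecture~\ref{Cj.E} to the contracted semigroup algebra $F_0\,S$ over a perfect field of characteristic $p,$ using $(F_0\,S)\p=F_0(S\p)$ and $\dim F_0\,S=\card(S-\{0\}).$ Your write-up merely makes explicit two points the paper leaves implicit (that $S^{(p)}$ is again a finite nilpotent commutative semigroup with zero, and the detailed verification that $\dim R\p=\card(S^{(p)}-\{0\})$), which is fine.
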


\begin{proof}
Observe that for any two
positive integers $n_1$ and $n_2,$ and any semigroup $S,$
we have $(S^{(n_1)})^{(n_2)}=S^{(n_1\,n_2)}.$
Hence, given $n_1$ and $n_2,$ if~(\ref{d.E_semigp}) holds for
all semigroups $S$ whenever $n$ is taken to be $n_1$ or $n_2,$
then it is also true for all $S$ whenever $n$ is taken to be $n_1 n_2.$
Indeed, in that situation we have
\begin{equation}\label{d.()()...}
\card(S\,{-}\,\{0\})\ \geq
\ n_1\ \card(S^{(n_1)}{-}\,\{0\})\ \geq
\ n_1 n_2\ \card(S^{(n_1\,n_2)}{-}\,\{0\}).
\end{equation}

Since~(\ref{d.E_semigp}) is trivial for $n=1,$ it will therefore
suffice to establish~(\ref{d.E_semigp}) when $n$ is a prime $p.$
In that case, let $F$ be any perfect field of characteristic $p.$
From~(\ref{d.+^p}) and~(\ref{d.ax^p}) we see that
$(F_0\,S)\p=F_0(S\p),$ and by construction,
$\dim_F\,F_0\,S=\card(S-\{0\}).$
Applying Conjecture~\ref{Cj.E} to $F_0\,S,$ we
thus get~(\ref{d.E_semigp}) for $n=p,$ as required.
\end{proof}

A strange proof, since to obtain the result for an $n$
with $k$ distinct prime factors,
we must work successively with semigroup algebras over
$k$ different fields!

So much for what we \emph{could} prove if we knew Eggert's Conjecture.
What can we conclude about semigroups using what we \emph{have} proved?
By the same trick of passing to contracted semigroup
algebras, Lemma~\ref{L.dimV^i} yields

\begin{corollary}[to Lemma~\ref{L.dimV^i}]\label{C.X^i}
Let $S$ be a commutative semigroup with zero, let $p$ be a prime, and
let $X$ be a finite subset of $S$ such that the $\!p\!$-th power
map is one-to-one on $X,$ and takes no nonzero element of $X$ to $0.$
Then
\begin{equation}\label{d.X^i}
\card(X^i{-}\,\{0\})\ \geq\ \card(X{-}\,\{0\})\quad
\mbox{ \ for \ }1\leq i\leq p.\hspace*{1.57in}\qed\hspace*{-1.57in}
\end{equation}
\end{corollary}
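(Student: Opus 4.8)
The plan is to mimic the passage from algebras to semigroups already used in Lemma~\ref{L.alg=>semigp}, now running it in the contracted-semigroup-algebra direction applied to Lemma~\ref{L.dimV^i}. First I would fix a perfect field $F$ of characteristic $p$ (the prime field $\mathbb{F}_p$ will do), and form the contracted semigroup algebra $R=F_0\,S$; if one prefers a finite-dimensional algebra one may instead take the contracted semigroup algebra of the subsemigroup-with-zero of $S$ generated by $X$, since Lemma~\ref{L.dimV^i} only requires the subspace $V$ to be finite-dimensional. Let $V\subseteq R$ be the $\!F\!$-subspace spanned by $X\setminus\{0\}$. Because $X\setminus\{0\}$ is a subset of the distinguished basis $S\setminus\{0\}$ of $R,$ it is $\!F\!$-linearly independent, so $\dim\,V=\card(X\setminus\{0\}).$

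Next I would check that $V$ satisfies the hypothesis of Lemma~\ref{L.dimV^i} with $n=p.$ Writing a general element of $V$ as $v=\sum_{x\in X\setminus\{0\}} a_x\,x,$ the Frobenius identity~(\ref{d.+^p}) gives $v^p=\sum_x a_x^p\,x^p.$ The two hypotheses on $X$ say precisely that the elements $x^p$ for $x\in X\setminus\{0\}$ are nonzero in $S$ and pairwise distinct, hence are pairwise distinct basis elements of $R.$ Therefore $v^p=0$ forces every $a_x^p=0,$ hence (as $F$ is a field) every $a_x=0,$ so every nonzero element of $V$ has nonzero $\!p\!$-th power. By Lemma~\ref{L.dimV^i} in its perfect-field form (the version obtained in the paragraph following that lemma), we conclude $\dim\,V^i\geq\dim\,V$ for $1\leq i\leq p.$

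It then remains to identify $\dim\,V^i$ with $\card(X^i\setminus\{0\}).$ The subspace $V^i$ is spanned by the $\!i\!$-fold products $x_1\cdots x_i$ with each $x_j\in X\setminus\{0\};$ each such product is either $0$ in $R$ or a basis element, and the latter products are exactly the elements of $X^i\setminus\{0\}$ — indeed a product in $S$ that avoids the absorbing element $0$ but still vanishes contributes nothing, while every nonzero element of $X^i$ is visibly a product of elements of $X\setminus\{0\}.$ Since distinct basis elements are linearly independent, $\dim\,V^i=\card(X^i\setminus\{0\}).$ Combining this with the inequality of the previous paragraph and with $\dim\,V=\card(X\setminus\{0\})$ yields~(\ref{d.X^i}).

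I do not expect any genuine obstacle: the corollary is essentially a dictionary translation of Lemma~\ref{L.dimV^i}, exactly parallel to Lemma~\ref{L.alg=>semigp}. The only points needing care are the middle step — making sure that \emph{both} hypotheses on $X$ are invoked, one to keep the $p$-th powers nonzero and one to keep them distinct, so that the $\!p\!$-th power map is injective on $V$ — and the bookkeeping in the first and last steps, where the dimension counts must be recognized as \emph{equalities} (not mere inequalities) because any subset of the semigroup basis of $F_0\,S$ is automatically $\!F\!$-linearly independent.
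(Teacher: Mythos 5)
Your proof is correct and follows exactly the route the paper intends: the paper gives Corollary~\ref{C.X^i} with only the remark that it follows ``by the same trick of passing to contracted semigroup algebras,'' i.e.\ by taking $V=F(X{-}\{0\})\subseteq F_0\,S$ over a perfect field of characteristic $p,$ using~(\ref{d.+^p}) to verify the hypothesis of Lemma~\ref{L.dimV^i}, and reading off dimensions as cardinalities of subsets of the semigroup basis. Your write-up supplies precisely these details (including the correct handling of the case $0\in X$ and the observation that only $V,$ not $R,$ need be finite-dimensional), so there is nothing to add.
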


Note that even though Lemma~\ref{L.dimV^i} was proved for
not necessarily commutative $R$ and for exponentiation
by an arbitrary integer $n,$ we have to assume
in Corollary~\ref{C.X^i} that
$S$ is commutative and $p$ a prime, in order to call
on~(\ref{d.+^p}) and conclude that $(F_0\,X)\p=F_0\,(X\p),$

(Incidentally, the same proof gives us the
corresponding result for semigroups $S$
without zero, with~(\ref{d.X^i}) simplified by removal of
the two ``$-\,\{0\}$''s.
However, this result is an immediate consequence of
the present form of Corollary~\ref{C.X^i},
since given any semigroup $S$ and subset $X\subseteq S,$
we can apply that corollary to
$X$ within the semigroup with zero $S\cup\{0\};$
and in that case, the symbols ``$-\,\{0\}$'' in~(\ref{d.X^i})
have no effect, and may be dropped.
Inversely, a proof of Corollary~\ref{C.X^i} from the version for
semigroups without zero is possible, though not as straightforward.)

I see no way of proving the analog of Corollary~\ref{C.X^i}
with a general integer $n$ replacing the prime $p.$
(One \emph{can} get it for prime-power values, by noting
that~(\ref{d.+^p}) and hence Lemma~\ref{L.dimV^i} work
for exponentiation by $p^k.$
I have not so stated those results only for simplicity of presentation.)
We make this

\begin{question}\label{Q.X^i}
Let $S$ be a commutative semigroup with zero, let $n$ be a positive
integer, and let $X$ be
a finite subset of $S$ such that the $\!n\!$-th power
map is one-to-one on $X,$ and takes no nonzero element of $X$ to $0.$
Must $\card(X^i{-}\,\{0\})\geq\card(X{-}\,\{0\})$
for $1\leq i\leq n$?
\end{question}

\section{Some plausible and some impossible generalizations}\label{S.genlzn}

The hypothesis on Corollary~\ref{C.X^i} concerns $\card(X\p-\{0\}),$
while the conclusion is about $\card(X^i-\{0\}).$
It is natural to ask whether we can make the hypothesis and the
conclusion more parallel, either by replacing $X^i$
by $X^{(i)}$ in the latter (in which case the inequality
in the analog of~(\ref{d.X^i}) would become equality, since
$X^{(i)}-\{0\}$ can't be larger than $X-\{0\}),$ or by
replacing $X\p$ by $X^p$ in the former.

But both of these generalizations are false, as shown by
the next two examples.

\begin{example}\label{E.X^(q)}
For any prime $p>2,$ and any $i$ with $1<i<p,$ there exists
a commutative semigroup $S$ with zero, and a subset $X$
such that the $\!p\!$-th power map is one-to-one on $X$
and does not take any nonzero element of $X$ to $0,$
but such that $\card(X^{(i)}{-}\,\{0\})<\card(X{-}\,\{0\}).$
\end{example}

\begin{proof}[Construction]
Given $p$ and $i,$ form the direct
product of the nilpotent semigroup $\{x,\,x^2,\dots,x^p,\,0\}$
and the cyclic group $\{1,y,\dots,y^{i-1}\}$ of order $i,$ and let
$X$ be the subset $\{x\}\times\{1,y,\dots,y^{i-1}\}.$
Then the $\!p\!$-th power map from $X$ to $X\p$ (which
is also $X^p)$ is bijective, the common
cardinality of these sets being $i;$ but
$X^{(i)}=\{x^i\}\times\{1\}$ has cardinality~$1.$
To make this construction a semigroup with zero, we may identify the
ideal $\{0\}\times\{1,y,\dots,y^{i-1}\}$ to a single element.
\end{proof}

\begin{example}\label{E.X^p}
For any prime $p>2$ there exist a commutative
semigroup $S$ with zero, and a subset $X\subseteq S,$ such
that $\card(X-\{0\})=\card(X^p-\{0\}),$ but such that for
all $i$ with $1<i<p,$ $\card(X^i-\{0\})<\card(X-\{0\}).$

\end{example}

\begin{proof}[Construction]
Let $S$ be the abelian semigroup with zero presented by $p+1$
generators, $x,\,y,\,z_1,\dots,z_{p-1},$ and relations
saying that each $z_i$ has zero product with every generator
(including itself).
Thus, $S$ consists of the elements
of the free abelian semigroup on $x$ and $y,$
together with the $p$ elements $0,\,z_1,\dots,z_{p-1}.$

Let $X$ be our generating set $\{x,\,y,\,z_1,\dots,z_{p-1}\}.$
Then we see that for every $i>1,$ the set $X^i-\{0\}$ has
$i+1$ elements, $x^i,\,x^{i-1}y,\dots,y^i.$
Hence $\card(X^p-\{0\})=p+1=\card(X-\{0\});$ but for $1<i<p,$
$\card(X^i-\{0\})=i+1<p+1.$
(We can make this semigroup finite by setting every 
member of $X^{p+1}$ equal to $0.)$
\end{proof}

(In the above examples, the case $p=2$ was excluded because
in that case, there are no $i$ with $1<i<p.$
However, one has the corresponding constructions with any prime
power $p^r>2$ in place of $p,$ including powers of $2,$ as long as
one adds to the statement corresponding to Example~\ref{E.X^(q)} the
condition that $i$ be relatively prime to $p.)$\vspace{.5em}

From the construction of Example~\ref{E.X^(q)}, we can also obtain a
counterexample to a statement which, if it were true, would,
with the help of Lemma~\ref{L.dimV^i},
lead to an easy affirmative answer to Question~\ref{Q.X^i}:

\begin{example}\label{E.F0X^n}
There exists a commutative semigroup $S$ with zero, a finite
subset $X\subseteq S,$ and an integer $n>0,$
such that the $\!n\!$-th power map is one-to-one on $X$
and does not take any nonzero element of $X$ to $0,$
but such that for some field $F,$ the $\!n\!$-th power map
on the span $FX$ of $X$ in $F_0 S$
does take some nonzero element to~$0.$
\end{example}

\begin{proof}[Construction]
Let us first note that though we assumed in
Example~\ref{E.X^(q)} that $p$ was a prime to emphasize
the relationship with Corollary~\ref{C.X^i}, all we needed
was that $p$ and $i$ be relatively prime.
For the present example, let us repeat that construction
with any integer $n>2$ (possibly, but not necessarily, prime)
in place of the $p$ of that construction,
while using a prime $p<n,$ not dividing $n,$
in place of our earlier $i.$
Thus, the $\!n\!$-th power map is one-to-one on $X,$
but the $\!p\!$-th power map is not.

Now let $F$ be any algebraically closed field of characteristic $p.$
Then on the subspace $FX\subseteq F_0 S,$
the $\!p\!$-th power map is (up to adjustment of scalars) an
$\!F\!$-linear map to the space $F X\p$ of smaller dimension; hence
it has nontrivial kernel.
(For the particular construction used in
Example~\ref{E.X^(q)}, that kernel contains  $x-xy.)$
But any element annihilated by the $\!p\!$-th power
map necessarily also has $\!n\!$-th power~$0.$
\end{proof}

The use of a field $F$ of positive characteristic
in the above construction suggests the following question,
an affirmative answer to which would indeed,
with Lemma~\ref{L.dimV^i},
imply an affirmative answer to Question~\ref{Q.X^i}.

\begin{question}\label{Q.F0X^n}
Suppose $X$ is a finite subset of a commutative semigroup
$S$ with zero, $n$ a positive integer
such that the $\!n\!$-th power map is one-to-one on $X$
and does not take any nonzero element of $X$ to $0,$
and $F$ a field of characteristic~$0.$
Must every nonzero element of the span $FX$ of $X$ in $F_0 S$
have nonzero $\!n\!$-th power?
\end{question}

In a different direction, Lemma~\ref{L.dimV^i}
leads us to wonder whether
there may be generalizations of Eggert's Conjecture independent
of the characteristic.

As a first try, we might consider a nilpotent commutative algebra $R$
over any field $F,$ and for an arbitrary positive integer $n$
ask whether $\dim(\r{span}(R^{(n)}))/\dim\,R\leq 1/n,$
where $\r{span}(R^{(n)})$ denotes the $\!F\!$-subspace
of $R$ spanned by $R^{(n)}.$
But this is nowhere near true.
Indeed, I claim that
\begin{equation}\label{d.spanR^(n)}
\mbox{If the characteristic of $F$ is either $0$ or $>n,$
then $\r{span}(R^{(n)})=R^n.$}
\end{equation}
For it is not hard to verify that for any $x_1,\dots,x_n\in R,$
\begin{equation}\label{d.n!prod}
\sum_{S\subseteq\{1,\dots,n\}}
(-1)^{\r{card}(S)}\ (\sum_{i\in S} x_i)^n\ =
\ (-1)^n\,n!\ x_1\dots\,x_n.
\end{equation}
(Every monomial of degree $n$ in $x_1,\dots,x_n$
other than $x_1\dots x_n$
fails to involve some $x_m,$ hence the sets indexing
summands of~(\ref{d.n!prod}) in which that monomial
appears can be paired off, $S\leftrightarrow S\cup\{m\},$
one of even and one of odd cardinality.
Hence the coefficients of every such monomial cancel, leaving only the
multiple of the monomial $x_1\dots x_n$ coming from $S=\{1,\dots,n\}.)$
Under the assumption on the characteristic of $F$
in~(\ref{d.spanR^(n)}), $n!$ is invertible,
so~(\ref{d.n!prod}) shows that $x_1\dots x_n\in \r{span}(R^{(n)}),$
proving~(\ref{d.spanR^(n)}).
Now taking $R=[F][x]/(x^{N+1})$ for $N\geq n,$
we see that $\r{span}(R^{(n)})=R^n$
has basis $\{x^n,\,x^{n+1},\dots,\,x^N\};$ so
$\dim(\r{span}(R^{(n)}))/\dim\,R=(N-n+1)/N,$ which
for large $N$ is close to $1,$ not to $1/n.$

However, something nearer to the spirit of Lemma~\ref{L.dimV^i},
with a chance of having a positive answer, is

\begin{question}\label{Q.V->}
Let $R$ be a finite-dimensional nilpotent commutative algebra
over an algebraically closed field $F,$ let $V$ be a subspace
of $R,$ and let $n$ be a positive integer such that every
nonzero element of $V$ has nonzero $\!n\!$-th power.
Must $\dim\,R\geq n\,\dim\,V$?
\end{question}

Above, $V$ is a subspace of $R,$
but in the absence of~(\ref{d.+^p}), we can't expect
$V^{(n)}$ to simultaneously be one.
In the next question, we turn the tables, and make the
\emph{target} of the $\!n\!$-th power map a subspace.

\begin{question}\label{Q.->W}
Let $R$ be a finite-dimensional nilpotent commutative algebra
over an algebraically closed field $F,$ let $W$ be a subspace
of $R,$ and let $n$ be a positive integer such that every
element of $W$ is an $\!n\!$-th power in $R.$
Must $\dim\,R\geq n\,\dim\,W$?
\end{question}

Let us look at the above two questions for $R=[F][x]/(x^{N+1}).$
Note that an element $r\neq 0$ of this algebra has
$r^n \neq 0$ if and only if the lowest-degree term of $r$ has degree
$\leq N/n,$ while a necessary
condition for $r$ to {\em be} an $\!n\!$-th power
(which is also sufficient if $n$ is not divisible by the
characteristic of $F)$ is that its lowest-degree
term have degree divisible by $n.$
Now for each of these properties, there are, in general,
large-dimensional {\em affine} subspaces of $[F][x]/(x^{N+1})$
all of whose elements have that property.
E.g., if $n\leq N,$ the $\!(N{-}1)\!$-dimensional affine space of
elements of the form $x+(\mbox{higher degree terms})$
consists of elements whose $\!n\!$-th powers
are nonzero, and for $F$ of characteristic not
divisible by $n,$ the $\!(N{-}n)\!$-dimensional affine space of
elements of the form $x^n+(\mbox{higher degree terms})$ consists
of $\!n\!$-th powers.
In each of these cases, if we fix $n$ and let $N\to\infty,$
the ratio of the dimension of our affine subspace
to that of our algebra approaches~$1.$
But these affine subspaces are not vector subspaces!
If $U$ is a vector subspace of $R=[F][x]/(x^{N+1}),$
and if for each $x^m$ which appears as the lowest degree term
of a member of $U,$ we choose a $w_m\in U$ with that lowest degree
term, it is not hard to see that these elements form a basis of $U.$
It is easily deduced from the above
discussion that if $U$ consists of $\!n\!$-th powers, or
consists of elements which, if nonzero, have nonzero $\!n\!$-th power,
then $U$ has dimension $\leq N/n.$
So for this $R,$ Questions~\ref{Q.V->}
and~\ref{Q.->W} both have affirmative answers.

Can those two questions be made the $m=1$ and $m=n$
cases of a question statable for all $1\leq m\leq n$?
Yes.
The formulation is less elegant than for those two
cases, but I include it for completeness.

\begin{question}\label{Q.U}
Let $R$ be a finite-dimensional nilpotent commutative algebra
over an algebraically closed field $F,$ let $U$ be a subspace
of $R,$ and let $1\leq m\leq n$ be integers such that every nonzero
element of $U$ has an $\!m\!$-th root in $R$ whose $\!n\!$-th
power is nonzero.
Must $\dim\,R\geq n\,\dim\,U$?
\end{question}

(Again, we easily obtain an affirmative answer
for $R=[F][x]/(x^{N+1}),$ essentially as in the cases $m=1$ and $m=n.)$

Early on, in thinking about Eggert's Conjecture, I convinced myself
that the noncommutative analog was false.
But the analog I considered was based on replacing $R\p$ by
$\r{span}(R\p)$ so that one could talk about its dimension.
However, the generalizations considered
in Questions~\ref{Q.V->}-\ref{Q.U}
are also plausible for noncommutative rings.

I also assumed in Questions~\ref{Q.V->}-\ref{Q.U} that
$F$ was algebraically closed,
because that hypothesis was essential to the proof of
Lemma~\ref{L.dimV^i}, and is the condition under which
solution-sets of algebraic equations behave nicely.
However, I don't have examples showing
that the results asked for are false without it.
So let us be bold, and ask

\begin{question}\label{Q.rash}
Does the generalization of Conjecture~\ref{Cj.E_semigp},
or an affirmative answer to any of Questions~\ref{Q.X^i},
\ref{Q.F0X^n}, \ref{Q.V->}, \ref{Q.->W} or \ref{Q.U}, hold
if the commutativity hypothesis is dropped, and/or, in the case of
Question~\ref{Q.V->}, if the assumption that $F$ be algebraically closed
is dropped \textup{(}or perhaps weakened to ``$F$
is infinite''\textup{)}?

\textup{(}For Question~\ref{Q.->W} one can
similarly drop the assumption that $F$ be
algebraically closed; but then one would want to
change the hypothesis that every element of $W$ have an
$\!n\!$-th root to the condition, equivalent thereto in the
algebraically closed case, that every $\!1\!$-dimensional
subspace of $W$ contain a nonzero $\!n\!$-th power, since the original
hypothesis would be unreasonably strong over
non-algebraically-closed $F.$
One can likewise make the analogous generalization
of Question~\ref{Q.U}.\textup{)}
\end{question}

If we go further, and drop not only the characteristic~$p$
assumption and the algebraic closedness of $F,$
but also the \emph{associativity} of $R,$ then there is an
easy counterexample to the analog of Eggert's Conjecture.

\begin{example}\label{E.nonassoc}
For every positive integer $d,$
there exists a graded, nilpotent, commutative, \emph{nonassociative}
algebra over the field $\mathbb{R}$ of real numbers,
$R=R_1\oplus R_2\oplus R_3,$
generated by $R_1,$ in which the respective dimensions of the
three homogeneous components are $d,\ 1,\ d,$ and in which
the ``cubing'' operation $r\mapsto r(rr)$
gives a bijection from $R_1$ to $R_3.$

Hence, writing $R^{(3)}$ for $\{r(rr)\mid r\in R\}=R_3,$ we
have $\dim R^{(3)}/\dim R=d/(2d+1),$ which is $>1/3$ if $d>1.$
\end{example}

\begin{proof}[Construction]
Let $W$ be a real inner product space of dimension $d,$
let $A=W\oplus\,\mathbb{R},$ made an $\!\mathbb{R}\!$-algebra
by letting elements of $\mathbb{R}\subseteq A$ act on $A$ on either
side by scalar multiplication, and letting the product of two elements
of $W$ be their inner product in $\mathbb{R}.$
Note that $W,\ W^2,\ W^3$ are respectively $W,\ \mathbb{R},\ W,$
and that on $W,$ the operation $w\mapsto w(ww)$ takes every element
to itself times the square of its norm, hence is a bijection $W\to W.$

For the above $A,$ let
us form $A\,\otimes_\mathbb{R}\,[\mathbb{R}][x]/(x^4),$ which is
clearly nilpotent; let $V$ be its subspace $Wx;$ and let $R$ be
the subalgebra generated by $V;$ namely,
$(Wx)\oplus(\mathbb{R}\,x^2)\oplus(Wx^3).$
Then the asserted properties are clear.
\end{proof}

The parenthetical comment following Lemma~\ref{L.dimV^i} shows,
however, that over an algebraically closed base field $F,$ there
is no example with the corresponding properties.

If in Example~\ref{E.nonassoc} we let $B$ be an orthonormal basis
of $W,$ then on closing $Bx\subseteq R$ under the
multiplication of $R$ (but not under addition or scalar multiplication),
we get a $\!2d+2\!$-element structure (a ``nonassociative
semigroup'', often called a ``magma'') which is a
counterexample to the nonassociative analogs of
Conjecture~\ref{Cj.E_semigp}, Corollary~\ref{C.X^i}
and Question~\ref{Q.X^i}.

I will end this section by recording,
for completeness, a positive-characteristic version of
Example~\ref{E.nonassoc} (though the characteristic will
not be the exponent whose behavior the example involves).
Before stating it, let us recall that a nonassociative
algebra is called \emph{power-associative} if every
$\!1\!$-generator subalgebra is associative; equivalently,
if the closure of every singleton $\{x\}$ under the multiplication
(intuitively, the set of ``powers'' of $x)$ is in fact a semigroup.
Let us call a graded nonassociative algebra
\emph{homogeneous-power-associative} if the subalgebra generated by
every homogeneous element is associative.
Example~\ref{E.nonassoc} above is easily seen to be
homogeneous-power-associative.
The same property in the next example will allow us to
avoid having to specify the
bracketing of the power operation we refer to.

\begin{example}\label{E.nonassoc_p}
For every prime $p,$
there exists a graded, nilpotent, commutative, nonassociative,
but homogeneous-power-associative
algebra $R=R_1\oplus \dots\oplus R_{p+1}$ over
a non-perfect field $F$ of characteristic $p,$
such that $R$ is generated by $R_1,$
the $\!p\,{+}1\!$-st power operation gives a surjection
$R_1\to R_{p+1}$ taking no nonzero element to zero,
and $\dim R_i=p$ for $i<p$ and for $i=p+1,$ but $\dim R_p=1.$

Hence, $\dim R^{(p+1)}/\dim R=p/(p^2+1)>1/(p+1).$
\end{example}

\begin{proof}[Sketch of construction]
Given $p,$ let $F$ be any field of characteristic $p$
having a proper purely inseparable extension $F'=F(u^{1/p}),$
such that every element of $F'$ has a $\!p\,{+}1\!$-st root in $F'.$
(We can get such $F$ and $F'$ starting with any
algebraically closed field $k$ of characteristic $p,$ and
any subgroup $G$ of the additive group $\Q$ of rational numbers
which is $\!p\,{+}1\!$-divisible but not $\!p\!$-divisible.
Note that $p^{-1}G\subseteq\Q$ will have the form
$G+p^{-1}h\,\Z$ for any $h\in G - p\,G.$
Take a group isomorphic to $G$ but written multiplicatively, $t^G,$ and
its overgroup $t^{p^{-1}G},$ and
let $F$ and $F'$ be the Mal'cev-Neumann power series
fields $k((t^G))$ and $k((t^{p^{-1}G}))$
\cite[\S2.4]{PMC_SF}, \cite{GMB_HL};
and let $u\in F$ be the element $t^h.$
The asserted properties are easily verified.)

Let us now form the (commutative, associative)
truncated polynomial algebra $[F'][x]/(x^{p+2}),$
graded by degree in $x,$
and let $R$ be the $\!F\!$-subspace of this algebra consisting of
those elements for which the coefficient of $x^p$ lies
in the subfield $F$ of $F'$ (all other coefficients
being unrestricted).
We make $R$ a graded nonassociative $\!F\!$-algebra by using the
multiplication of $[F'][x]/(x^{p+2})$ on all pairs of homogeneous
components {\em except} those having degrees summing to $p,$ while
defining the multiplication when the degrees sum to $p$ by fixing
an $\!F\!$-linear retraction $\psi: F'\to F,$ and taking the product of
$a\,x^i$ and $b\,x^{p-i}$ $(0<i<p,$ $a,b\in F')$ to be $\psi(ab)\,x^p.$

We claim that $R$ is homogeneous-power-associative; in fact, that powers
of homogeneous elements of $R,$ however bracketed, agree with
the values of
these same powers in the associative algebra $[F'][x]/(x^{p+2}).$
Note first that the evaluations of powers of elements homogeneous
of degrees other than $1$ never pass through $R_p,$ so they certainly
come out as in $[F'][x]/(x^{p+2}).$
For an element $a\,x$ of degree $1$ $(a\in F'),$ the
same reasoning holds for powers less than the $\!p\!$-th.
In the case of the $\!p\!$-th power,
the last stage in the evaluation of any bracketing of $(a\,x)^p$
takes the form $(a\,x)^i\cdot (a\,x)^{p-i}= \psi(a^i\,a^{p-i})\,x^p;$
but $a^i\,a^{p-i}=a^p\in F,$ which is fixed
by $\psi,$ so the result again comes out as in $[F'][x]/(x^{p+2}).$
Knowing this, it is easy to verify likewise that all computations of the
$\!p\,{+}1\!$-st power of $a\,x\in R_1$ agree with its
value in $[F'][x]/(x^{p+2}).$

The other asserted properties are now straightforward.
In particular the $\!p\,{+}1\!$-st power map $R_1\to R_{p+1}$
is surjective, and sends no nonzero element to $0,$
because these statements are true in $[F'][x]/(x^{p+2})$
(surjectivity holding by our assumption on $\!p\,{+}1\!$-st
roots in $F').$
\end{proof}

\section{Some attempts at counterexamples to Eggert's Conjecture for semigroups}\label{S.eg}

Since Eggert's Conjecture implies the semigroup-theoretic
Conjecture~\ref{Cj.E_semigp}, a counterexample to the
latter would disprove the former.
We saw in \S\ref{S.intro} that for certain
sorts of truncated polynomial algebras over a field $F$ of
characteristic $p,$ the ratio $\dim\,R\p/\dim\,R$ was exactly
$1/p;$ i.e., as high as Eggert's Conjecture allows.
Those algebras are contracted semigroup
algebras $F_0\,S,$ where $S$ is a semigroup with zero presented by
one generator $x$ and one relation $x^{pk+1}=0;$ so these semigroups
have equality in Conjecture~\ref{Cj.E_semigp}.
It is natural to try to see whether, by some modification of
this semigroup construction, we can push the ratio
$\card(S\p{-}\,\{0\})/\card(S{-}\,\{0\})$ just a little above~$1/p.$

In scratchwork on such examples, it is convenient
to write the infinite cyclic
semigroup not as $\{x,\,x^2,\,x^3,\,\dots\,\},$ but additively, as
$\{1,\,2,\,3,\,\dots\,\}.$
Since in additive notation, $0$ generally denotes an identity
element, it is best to denote a ``zero'' element by $\infty.$
So in this section we shall not adopt
Convention~\ref{Cv.semigp}, but follow this additive notation.
Thus, the sort of nilpotent cyclic semigroup with zero that
gives equality in the statement of Conjecture~\ref{Cj.E_semigp} is
\begin{equation}\label{d.12...}
\{1,\ 2,\ \dots\,,\ N,\ \infty\,\},\quad
\mbox{where $N$ is a multiple of }\,n.
\end{equation}

For $S$ a finite nilpotent abelian semigroup with zero,
the semigroup version of Eggert's conjecture
can be written as saying that the integer
\begin{equation}\label{d.nSn-S}
n\ \card(S^{(n)}{-}\,\{\infty\})\ -\ \card(S\,{-}\,\{\infty\})
\end{equation}
is always $\leq 0.$
(We continue to write $S^{(n)}$ for what in our
additive notation is now $\{n\,x\mid x\in S\}.)$

What kind of modifications can we apply to~(\ref{d.12...})
in the search for variant examples?
We might impose a relation; but it turns out that this
won't give anything new.
E.g., if for $i<j$ in $\{1,2,\,\dots\,,\,N\}$ we impose
on~(\ref{d.12...}) the
relation $i=j,$ then this implies $i+1=j+1,$ and so forth;
and this process eventually identifies some $h\leq N$
with an integer $>N,$ which, in~(\ref{d.12...}), equals $\infty.$
So $h$ and all integers $\geq h$ fall together with $\infty;$
and if we follow up the consequences, we eventually find that every
integer $\geq i$ is identified with $\infty.$
Thus, we get a semigroup just like~(\ref{d.12...}), but with
$i-1$ rather than $N$ as the last finite value.

So let us instead pass to a subsemigroup of~(\ref{d.12...}).
The smallest change we can make is to drop $1,$ getting the
subsemigroup generated by $2$ and $3,$ which we shall now denote $S.$
Then $\card(S-\{\infty\})$ has gone down by $1,$ pushing the
value of~(\ref{d.nSn-S}) up by $1;$ but the integer $n$
has ceased to belong to $S^{(n)},$ decreasing~(\ref{d.nSn-S}) by $n.$
So in our attempt to find a counterexample, we have ``lost ground'',
decreasing~(\ref{d.nSn-S}) from $0$ to $-n+1.$

However, now that $1\notin S,$
we can regain some ground by imposing relations.
Suppose we impose the relation that identifies $N-1$
either (a)~with $N$ or (b)~with $\infty.$
If we add any member of $S$ (loosely speaking,
any integer $\geq 2)$ to both sides of either relation,
we get $\infty=\infty,$ so no additional identifications are implied.
Since we are assuming $N$ is divisible by $n,$ the integer $N-1$ is
not; so we have again decreased the right-hand term of~(\ref{d.nSn-S}),
this time without decreasing the left-hand term; and thus brought
the total value to $-n+2.$
In particular, if $n=2,$ we have returned to the value $0;$
but not improved on it.

I have experimented with more complicated examples of
the same sort, and gotten very similar results:
I have not found one that made the value of~(\ref{d.nSn-S}) positive;
but surprisingly often, it was possible to arrange things so that
for $n=2,$ that value was $0.$
Let me show a ``typical'' example.

We start with the additive subsemigroup of the natural
numbers generated by $4$ and $5.$
I will show it by listing an initial string of the
positive integers, with the members of our subsemigroup underlined:
\begin{equation}\label{d.bold}
1\ 2\ 3\ \underline{4\ 5}\ 6\ 7\ \underline{8\ 9\ 10}\ 11
\ \underline{12\ 13\ 14\ 15\ 16\ 17\ \dots\ .}
\end{equation}

Assume this to be truncated at some large integer $N$
which is a multiple of $n,$ all larger integers being
collapsed into $\infty.$
If we combine the effects on the two terms of~(\ref{d.nSn-S})
of having dropped the six integers $1,\,2,\,3,\,6,\,7,\,11$
from~(\ref{d.12...}),
we find that, assuming $N\geq 11n,$~(\ref{d.nSn-S}) is now $6(-n+1).$

Now suppose we impose the relation $i=i+1$ for some $i$ such
that $i$ and $i+1$ both lie in~(\ref{d.bold}).
Adding $4$ and $5$ to both sides of this equation, we
get $i+4=i+5=i+6;$ adding $4$ and $5$ again we get
$i+8=i+9=i+10=i+11.$
At the next two rounds, we get strings of equalities that overlap
one another; and all subsequent strings likewise overlap.
So everything from $i+12$ on falls together with $N+1$ and
hence with $\infty;$ so we may as well assume
\begin{equation}\label{d.n+1=i+12}
N+1\ =\ i+12.
\end{equation}

What effect has imposing the relation $i=i+1$ had on~(\ref{d.nSn-S})?
The amalgamations of the three strings of integers described
decrease $\card(S{-}\{\infty\})$ by $1,$ $2$ and $3$ respectively,
so in that way, we have gained ground, bringing~(\ref{d.nSn-S})
up from $6(-n+1)$ to possibly $6(-n+2).$
But have we decreased $\card(S^{(n)}{-}\{\infty\}),$ and so
lost ground, in the process?

If $n>2,$ then even if there has been no such
loss, the value $6(-n+2)$ is negative; so let us assume $n=2.$
If we are to avoid bringing~(\ref{d.nSn-S}) below $0,$
we must make sure that none of the sets that were fused into
single elements,
\begin{equation}\label{d.i+}
\{i,\,i+1\},\quad\{i+4,\,i+5,\,i+6\},\quad\{i+8,\,i+9,\,i+10,\,i+11\},
\end{equation}
contained more than one member of $S^{(2)}.$
For the first of these sets, that is no problem; and for
the second, the desired conclusion can be achieved by taking
$i$ odd, so that of the three elements of that set, only $i+5$ is even.
For the last it is more difficult -- the set will contain two even
values, and if $i$ is large, these will both belong to $S^{(2)}.$

However, suppose we take $i$ not so large; say we choose it
so that the smaller of the two even values in that set is
the largest even integer that does \emph{not} belong to $S^{(2)}.$
That is $22,$ since $11$ is the largest integer not in~(\ref{d.bold}).
Then the above considerations show
that we do get a semigroup for which~(\ref{d.nSn-S}) is zero.

The above choice of $i$ makes $i+9$ (the smallest even value in
the last subset in~(\ref{d.i+})) equal to $22$ (the largest
even integer not in $S^{(2)}),$ so $i=13,$ so
by~(\ref{d.n+1=i+12}), $N+1=25.$

Let us write down formally the contracted semigroup algebras of
the two easier examples described earlier, and of the above example.
\begin{example}\label{E.4,5}
Let $F$ be a perfect field of characteristic~$2.$
Then the following nilpotent algebras have
equality in the inequality of Eggert's Conjecture.
\begin{equation}\label{d.2,3}
R\ =\ [F][x^2,\,x^3]\,/\,(x^{N-1}-x^{N},\,x^{N+1},\,x^{N+2})
\qquad \mbox{for every even $N>2,$}
\end{equation}
\begin{equation}\label{d.2,3'}
R\ =\ [F][x^2,\,x^3]\,/\,(x^{N-1},\,x^{N+1},\,x^{N+2})
\qquad \mbox{for every even $N>2,$}
\end{equation}
\begin{equation}\label{d.4,5}
R\ =\ [F][x^4,\,x^5]\,/\,(x^{13}-x^{14},\,x^{25},\dots,x^{28}).
\end{equation}

More precisely, in both~\textup{(\ref{d.2,3})}
and~\textup{(\ref{d.2,3'})}
$\dim\,R=N-2,$ and $\dim\,R^{(2)}=(N-2)/2,$
while in~\textup{(\ref{d.4,5})},
$\dim\,R=18,$ and $\dim\,R^{(2)}=9.$
\qed
\end{example}

Many examples behave like these.
A couple more are
\begin{equation}\label{d.more}
[F][x^2,\,x^5]/(x^{11}-x^{12},\,x^{\geq 15}),\qquad
[F][x^3,\,x^7]/(x^{13}-x^{14},\,x^{\geq 25})
\end{equation}
(where ``$x^{\geq n}$'' means ``$x^n$ and all higher powers''; though
in each case, only finitely many are needed).

Perhaps Eggert's Conjecture is true, and these examples
``run up against the wall'' that it asserts.
Or --~who knows -- perhaps if one pushed this sort of
exploration further, to homomorphic images of
semigroups generated by families of three or more
integers, and starting farther from $0,$
one would get counterexamples.

For values of $n$ greater than $2,$ I don't know any examples of
this flavor that even bring~(\ref{d.nSn-S}) as high as zero.
(But a class of examples of a different sort, which does, was noted in
the last paragraph of \S\ref{S.intro}.)

Incidentally, observe that in the semigroup-theoretic context that
led to~(\ref{d.2,3}) and~(\ref{d.2,3'}), we had the choice of
imposing either the relation $N-1=N$ or the relation $N-1=\infty.$
However, in the development that
gave~(\ref{d.4,5}), setting a semigroup element equal to $\infty$
would not have done the same job as setting two such elements equal.
If we set $i=\infty,$ then, for example, $i+4$ and $i+5$ would each
become $\infty,$ so looking at the latter two elements,
we would lose one from $S^{(2)}$ as well as one not in $S^{(2)}.$
Above, we instead set $i=i+1,$ and the resulting
pair of equalities $i+4=i+5=i+6$ turned a family consisting of
two elements not in $S^{(2)}$ and one in $S^{(2)}$
into a single element of~$S^{(2)}.$

Turning back to Eggert's ring-theoretic conjecture,
it might be worthwhile to experiment with
imposing on subalgebras of $[F][x]$ relations ``close to'' those
of the sort used above, but not expressible in purely
semigroup-theoretic terms; for instance, $x^i+x^{i+1}+x^{i+2}=0,$ or
$x^i-2x^{i+1}+x^{i+2}=0.$

\section{Sketch of the literature}\label{S.literature}

The main positive results in the
literature on Eggert's Conjecture concern two kinds of
cases: where $\dim(R\p)$ (or some related
invariant) is quite small, and where $R$ is graded.

N.\,H.\,Eggert \cite{E}, after
making the conjecture, in connection with
the study of groups that can appear as the group of units of a finite
unital ring $A$ (the nonunital
ring $R$ to which the conjecture would be applied being
the Jacobson radical of $A),$ proved it for $\dim(R\p)\leq 2.$
That result was extended to $\dim(R\p)\leq 3$ by R.\,Bautista \cite{RB},
both results were re-proved more simply by
C.\,Stack \cite{CS2}, \cite{CS3}, and most recently pushed up to
$\dim(R\p)\leq 4$ by B.\,Amberg and L.\,Kazarin \cite{A+K2}.
Amberg and Kazarin also prove in \cite{A+K1} some similar
results over an arbitrary field,
in the spirit of our Questions~\ref{Q.V->} and~\ref{Q.->W},
and they show in \cite{A+K4} that, at least when the values
$\dim(R^i/R^{i+1})$ are small, these give a nonincreasing function
of $i.$
In \cite{A+K4} they give an extensive survey of results on this
subject and related group-theoretic questions.

K.\,R.\,McLean \cite{McL1}, \cite{McL2} has obtained strong positive
results in the case where $R$ is graded and generated by its
homogeneous component of degree~$1.$
In particular, in \cite{McL1} he proves Eggert's Conjecture in
that case if $(R_3)\p=0$ (recall that in Corollary~\ref{C.graded}
we could not get beyond the case $(R_2)\p=0),$
or if $R\p$ is generated by two elements.
Moreover, without either assumption (but still assuming
$R$ graded and generated in degree~$1),$ he proves
that $\dim\,R\p/\dim\,R\leq 1/(p\,{-}1).$
His technique involves taking a subspace $V\subseteq R_1$ as at the
start of \S\ref{S.V^n} above, and constructing recursively
a family of direct-sum decompositions of $V,$ each new summand arising
as a vector-space complement of the kernel of multiplication by an
element obtained using the previous steps of the construction.
He also shows in \cite{McL1} that Eggert's Conjecture
holds for the radicals of group algebras of finite
abelian groups over perfect fields $F$ of nonzero characteristic.

S.\,Kim and J.\,Park \cite{K+P} prove Eggert's Conjecture
when $R$ is a commutative nilpotent \emph{monomial algebra},
i.e., an algebra with a presentation in which all relators
are monomials in the given generators.

M.\,Korbel\'{a}\v{r} \cite{MK} has recently shown that Eggert's
Conjecture holds whenever $R\p$ can be generated as
an $\!F\!$-algebra by two elements.
(So a counterexample in the spirit of the
preceding section would require at least $3$ generators.)
\cite{MK} ends with a generalization of Eggert's conjecture,
which is equivalent to the case of Question~\ref{Q.V->} above
in which $F$ is a field of positive characteristic $p$
and $n=p,$ but $F$ is not assumed perfect.

In \cite{LH}, a full proof of Eggert's
Conjecture was claimed, but the argument was flawed.
(The claim in the erratum to that paper, that the proof is at
least valid for the graded case, is also incorrect.)

There is considerable variation in notation and language
in these papers.
E.g., what I have written $R\p$ is denoted $R^{(1)}$ in
Amberg and Kazarin's papers, $R\p$ in Stack's and Korbel\'{a}\v{r}'s,
and $R[p]$ in McLean's (modulo differences in the letter
used for the algebra $R).$
McLean, nonstandardly, takes the statement that $R$ is graded to
include the condition that it is generated by its
degree~$\!1\!$ component.

Though I do not discuss this above, I have,
also examined the behavior of the
sequence of dimensions of quotients $R^i/R^{i+1}$ for a
commutative algebra $R.$
Most of my results seem to be subsumed by those of Amberg and Kazarin,
but I will record here a question which that line of thought suggested,
which seems of independent interest for its simplicity.
Given two subspaces $V$ and $W$ of a commutative algebra, let
$\r{Ann}_V\,W$ denote the subspace $\{x\in V\mid xW=\{0\}\}\subseteq V.$

\begin{question}\label{Q.AnnV^n}
If $R$ is a commutative algebra over a
field $F,$ $V$ a finite-dimensional
subspace of $R,$ and $n$ a positive integer, must
\begin{equation}\label{d.AnnV^n}
\dim(V/\r{Ann}_V\,V^n)\ \leq\ \dim\,V^n\ \mbox{?}
\end{equation}
\end{question}

I believe I have proved~(\ref{d.AnnV^n}) for $\dim\,V^n\leq 4.$
The arguments become more intricate with each succeeding value
$1,$~$2,$~$3,$~$4.$

I am indebted to Cora Stack for bringing Eggert's Conjecture
to my attention and providing a packet of relevant literature,
to Martin Olsson for pointing me to the result in \cite{Mumford}
used in the proof of Lemma~\ref{L.dimV^i}, and to the referee for
making me justify an assertion that was not as straightforward as I
had thought.


\begin{thebibliography}{00}

\bibitem{A+K1} Bernhard Amberg and Lev Kazarin,
{\em On the dimension of a nilpotent algebra,}
Math. Notes {\bf 70} (2001) 439--446.
MR~{\bf 2002m}:13006.

\bibitem{A+K2} Bernhard Amberg and Lev Kazarin,
{\em Commutative nilpotent $p$-algebras with small dimension,}
pp.\,1--19 in {\em Topics in infinite groups,}
Quad. Mat., 8, Dept. Math., Seconda Univ. Napoli, Caserta,
2001.
MR~{\bf 2003k}:13002.

\bibitem{A+K4} Bernhard Amberg and Lev Kazarin,
{\em On the powers of a commutative nilpotent algebra,} pp.1--12 in
{\em Advances in Algebra,} World Sci. Publ., River Edge, NJ, 2003.
MR~{\bf 2005f}:16031.

\bibitem{A+K5} Bernhard Amberg and Lev Kazarin,
{\em Nilpotent $p$-algebras and factorized $p$-groups,}
pp.\,130--147 in {\em Groups St. Andrews 2005,} Vol. 1,
London Math. Soc. Lecture Note Ser., 339.
MR~{\bf 2008f}:16045.

\bibitem{RB} R.\,Bautista,
{\em Sobre las unidades de \'{a}lgebras finitas,}
({\em Units of finite algebras}),
Anales del Instituto de Matem\'{a}ticas,
Universidad Nacional Aut\'{o}noma de M\'{e}xico,
{\bf 16} (1976) 1--78.
\url{http://texedores.matem.unam.mx/publicaciones/index.php?option=com_remository&Itemid=57&func=fileinfo&id=227}\,.
MR~{\bf 58}\#11011.

\bibitem{GMB_HL} George M. Bergman,
{\em Conjugates and nth roots in Hahn-Laurent group rings,}
Bull. Malaysian Math. Soc. (2) {\bf 1} (1978) 29-41;
historical addendum at (2) {\bf 2} (1979) 41-42.
MR~{\bf 80a}:16003, {\bf 81d}:16016.
(For reasons noted in the footnote to the title, I
used ``Hahn-Laurent group ring'' for what is standardly
called a ``Mal'cev-Neumann group ring''.
But I have subsequently followed standard usage.)

\bibitem{C+P} A. H. Clifford and G. B. Preston, 
{\em The algebraic theory of semigroups, v.\,I.,} Mathematical
Surveys No. 7, American Mathematical Society, 1961, xv$+$224~pp.
MR~{\bf 24}\#A2627.

\bibitem{PMC_SF} P. M. Cohn,
{\em Skew fields. Theory of general division rings,}
Encyclopedia of Mathematics and its Applications, 57.
Cambridge University Press, Cambridge, 1995.
MR~{\bf 97d}:12003.

\bibitem{E} N.\,H.\,Eggert,
{\em Quasi regular groups of finite commutative nilpotent algebras,}
Pacific J. Math. {\bf 36} (1971) 631--634.
MR~{\bf 44}\#262.

\bibitem{LH} Lakhdar Hammoudi,
{\em Eggert's conjecture on the dimensions of nilpotent algebras,}
Pacific J. Math. {\bf 202} (2002) 93--97.
MR~{\bf 2002m}:13008.
Erratum at {\bf 220} (2005) 197.
MR~{\bf 2006h}:13010.

\bibitem{K+P} Segyeong Kim and Jong-Youll Park,
{\em A solution of Eggert's conjecture in special cases,}
Honam Math. J. {\bf 27} (2005) 399-404.
Zbl 1173.13302.

\bibitem{MK} Miroslav Korbel\'{a}\v{r},
{\em $2$-generated nilpotent algebras and Eggert's conjecture,}
J. Algebra {\bf 324} (2010) 1558--1576.
MR2673751.

\bibitem{McL1} K.\,R.\,McLean,
{\em Eggert's conjecture on nilpotent algebras,}
Comm. Algebra {\bf 32} (2004) 997--1006.
MR~{\bf 2005h}:16032.

\bibitem{McL2} K.\,R.\,McLean,
{\em Graded nilpotent algebras and Eggert's conjecture,}
Comm. Algebra {\bf 34} (2006) 4427--4439.
MR~{\bf 2007k}:16038.

\bibitem{Mumford} David Mumford, 
{\em The red book of varieties and schemes,}
Lecture Notes in Mathematics, 1358. Springer-Verlag, 1988.
MR{\bf 89k}:14001.

\bibitem{CS2} Cora Stack,
{\em Dimensions of nilpotent algebras over fields of prime
characteristic,}
Pacific J. Math. {\bf 176} (1996) 263--266.
MR~{\bf 97m}:16037.

\bibitem{CS3} Cora Stack,
{\em Some results on the structure of finite nilpotent algebras over
fields of prime characteristic,}
J. Combin. Math. Combin. Comput. {\bf 28} (1998) 327--335.
MR~{\bf 2000a}:13031.

\end{thebibliography}
\end{document}